\newtheorem{thm}{Theorem}[section]
\newtheorem{lem}[thm]{Lemma}
\newtheorem{ex}[thm]{Example}
\newcommand{\ds}{\displaystyle}
\begin{document}

\title{On the range of composition operators on spaces of entire functions}

\author[Saikat Mukherjee]{S. Mukherjee}
\address{Department of Mathematics\\
University of Wyoming\\
Laramie, WY 82071-3036}
\email{smukherj@uwyo.edu}

\author[Farhad Jafari]{F. Jafari}
\address{Department of Mathematics\\
University of Wyoming\\
Laramie, WY 82071-3036}
\email{fjafari@uwyo.edu}

\author[John McInroy]{J. E. McInroy}
\address{Department of Electrical and Computer Engineering\\
University of Wyoming\\
Laramie, WY 82071-3036}
\email{mcinroy@uwyo.edu}

\thanks{The work of second and third authors was partially supported by a grant from AFOSR}
\subjclass[2010]{32A15, 47B33, 47B32, 46E22}
\keywords{Entire functions, bandlimited signals, composition operators, range spaces, de Branges-Rovnyak spaces}

\begin{abstract}
The celebrated Paley-Wiener theorem naturally identifies the spaces of bandlimited functions with subspaces of entire functions of exponential type. Recently, it has been shown that these spaces remain invariant only under composition with affine maps. After some motivation demonstrating the importance of characterization of range spaces arising from the action of more general composition operators on the spaces of bandlimited functions,  in this paper we identify the subspaces of $ L^2 (\mathbb{R}) $ generated by these actions. Extension of these theorems where Paley-Wiener spaces are replaced by the deBranges-Rovnyak spaces are given.
\end{abstract}

\maketitle

\section{Introduction}
Let $M_{f}(r)$ denote the maximum modulus of $f(z)$ for $|z|=r$, i.e., $M_{f}(r) = \ds \max_{|z|=r}|f(z)|$. Recall that an \textit{entire function} $f$ \textit{is of order} $\rho$ if
\begin{center}
$\ds \limsup_{r\rightarrow \infty}\frac{\log \log M_{f}(r)}{\log r}=\rho$, \qquad $0 \leq \rho \leq \infty.$
\end{center}
If the order of an entire function $f$ is finite, we will define another number associated to $f$, called the \textit{type of the function $f$}, which more precisely describes the rate of growth of $f$. The \textit{entire function $f(z)$ of positive order $\rho$ is of type $\sigma$} if
\begin{center}
$\ds \limsup_{r\rightarrow \infty} \frac{\log M_{f}(r)}{r^{\rho}}=\sigma$, \qquad $0 \leq \sigma \leq \infty.$
\end{center}
An entire function $f$ is said to be of \textit{exponential type $\sigma$}, if it is of order $\rho = 1$ and type $\sigma$ with $0 < \sigma < \infty$.

Suppose $f$ is an analytic function in $\Omega \subseteq \mathbb{C}$. $f$ is called \textit{bounded type} in $\Omega$ if $f(z) = \frac{p(z)}{q(z)}$, where both $p$, $q$ are analytic and bounded in $\Omega$ and $q$ is not identically zero. By a theorem of M. G. Kre\v{\i}n, an entire function is of exponential type if it is bounded type in the upper and lower half of the complex plane. In that case, the exponential type of the function is equal to the maximum of its mean types in the upper and lower half planes. Two well-known formulas for \textit{mean type} $h$ of a function $f$ in the upper half plane are
\begin{center}
$h = \ds \limsup_{y\rightarrow \infty} \frac{\log |f(iy)|}{y}$,\\
and $h = \ds \lim_{r\rightarrow \infty} \frac{2}{\pi r}\int_0^{\pi}\log |f(re^{i\theta})|\sin \theta d\theta$.
\end{center}
Mean type is a generalization of exponential type to functions which are not necessarily entire.

Let $H(G)$ be the set of holomorphic functions on a domain $G\subset\mathbb{C}$ and let $X\subseteq H(G)$ be a Banach space of holomorphic functions on $G$. Assume that the embedding $X\rightarrow H(G)$ is continuous with respect to the respective topologies. Suppose $\varphi$ is a holomorphic function from $G$ into $G$. Let $T:X\rightarrow X$ be a bounded operator. We say $T$ is a \textit{composition operator} on $X$ induced by $\varphi$ if $(Tf)(z)=f(\varphi(z))$, for every $f\in X$. In this case, it is natural to denote $T$ by $C_{\varphi}$. We say $T$ is a \textit{weighted composition operator} if $(Tf)(z)=m(z)f(\varphi(z))$, i.e., $T=M_{m}C_{\varphi}$, where $M_{m}$ is a multiplication operator with multiplier $m\in H(G)$ is a holomorphic function on $G$. If, in addition, $m\in Y\subseteq H(G)$, we say $T$ is a \textit{$Y$-weighted composition operator}.

Composition operators have been the subject matter of study over the last few decades (see \cite{CoMa95} for extensive references). In this paper we study these operators on Paley-Wiener and de Branges-Rovnyak spaces. As the composition operators that keep the Paley-Wiener spaces invariant are quite limited, we consider more general composition operators acting on Paley-Wiener spaces and characterize the range of these actions. This problem arises when bandlimited signals are warped through a nonlinear diffeomorphism of the underlying domain and the ensuing signals are being approximated. Characterization of these range spaces allows use of appropriate basis functions to provide  appropriate approximation of these signals. To establish the notation and a detailed introduction, in Section 2 we will discuss the bounded composition operators on Paley-Wiener spaces and extend this result to weighted composition operators. In Section 3 we consider more general, non-affine, composition operators and ask how large can the range of these operators acting on bandlimited signals get? We provide a full characterization of these range spaces as reproducing kernel Hilbert spaces and give some of their properties. Finally, in Section 4 we generalize these results to de Branges-Rovnyak spaces.  The Paley-Wiener spaces are a special instance of these spaces.

\section{Bounded weighted composition operators on Paley-Wiener Spaces}
A finite energy signal is said to be \textit{bandlimited} if its spectrum vanishes outside of a finite interval of the form $[-a,a]$. The smallest such $a$ is called the \textit{bandwidth} of the signal. Such a signal $f(t)$ can be written in the form
\begin{equation}\label{band}
f(t) = \int_{-a}^{a}\widehat{f}(w)e^{-itw}dw
\end{equation}
with $\widehat{f}\in L^{2}([-a,a])$.
The \textit{Paley-Wiener space} $B^{2}_{a}$ is the space of all entire functions of exponential type less than or equal to $a$ whose restriction to the real line belong to the space $L^{2}(\mathbb{R})$. This is a Hilbert Space with the inner product inducing the norm
\begin{center}
$\|f\|^{2}=\ds \int_{-\infty}^{+\infty}|f(x)|^{2}dx$.
\end{center}
The inclusion map from $ B^2_a $ embeds the Paley-Wiener spaces into $ L^2( \mathbb{R}) $ isometrically. Thus without loss of generality we may identify $B^{2}_{a}$ as a closed subspace of $L^{2}(\mathbb{R})$.
It follows immediately from the Paley-Wiener theorem (\cite{PaWi34}) that the space $B^{2}_{a}$ is the space of all entire functions bandlimited to $[-a,a]$. This is in fact a reproducing kernel Hilbert space with the reproducing kernel (see ~\cite{de68}, ~\cite{Sa97})
\begin{center}
$\ds k_{w}(z)=\frac{\sin a(z-\overline{w})}{\pi (z-\overline{w})}$
\end{center}
The ubiquity of bandlimited signals in applications and the natural correspondence of the space of bandlimited functions and Paley-Wiener spaces gives rise to the very natural question: which composition operators preserve these spaces? Equivalently stated, for what $\varphi$'s does $C_{\varphi}$ send $B^{2}_{a}$ into itself? Very recently, Chac$\acute{\text{o}}$n, et. al. (~\cite{ChChGi07}) provided a complete answer to this questions.

\begin{thm}\label{Chacon}
Let $\varphi :\mathbb{C}\rightarrow \mathbb{C}$ be a nonconstant entire function. The operator $C_{\varphi}$ is bounded on $B^{2}_{a}$ if and only if $\varphi (z)=cz+d$, $z\in \mathbb{C}$, with $0<|c|\leq 1$, and $c\in \mathbb{R}$.
\end{thm}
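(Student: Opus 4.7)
My plan is to separate sufficiency from necessity and to reduce the latter to a single test-function calculation. For sufficiency, assume $\varphi(z) = cz + d$ with $c \in \mathbb{R}$ and $0 < |c| \leq 1$. For $f \in B^{2}_{a}$, the composition $f \circ \varphi$ is entire of exponential type $|c|a \leq a$, and the substitution $y = cx + d$ in the $L^2$ norm integral gives $\|C_{\varphi} f\|_2^2 = |c|^{-1}\|f\|_2^2$. This simultaneously settles boundedness and yields $\|C_{\varphi}\| = |c|^{-1/2}$.

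For necessity, I would probe $C_{\varphi}$ with the single test function $f_0(z) = \sin(az)/(\pi z) \in B^{2}_{a}$, which is (up to a constant) the reproducing kernel at $0$. Since $C_{\varphi}$ is bounded, $f_0 \circ \varphi$ must be entire of exponential type at most $a$, hence of order at most $1$. The key input is the elementary identity $|\sin(x+iy)|^2 = \sinh^2 y + \sin^2 x$, which gives $|\sin w| \sim \tfrac{1}{2} e^{|\mathrm{Im}\, w|}$ for large $|\mathrm{Im}\, w|$. Combined with a Borel--Carath\'eodory-type estimate---showing that $\max_{|z|=r} |\mathrm{Im}\,\varphi(z)|$ is comparable to $M_{\varphi}(2r)$ for any nonconstant entire $\varphi$---this would yield
\[
M_{f_0 \circ \varphi}(r) \;\gtrsim\; \frac{\exp\!\bigl(\tfrac{a}{2}\,M_{\varphi}(r/2)\bigr)}{M_{\varphi}(r/2)}
\]
for all sufficiently large $r$. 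If $\varphi$ were transcendental, $M_{\varphi}$ would dominate every polynomial and $f_0 \circ \varphi$ would have infinite order, contradicting the order-at-most-$1$ bound; hence $\varphi$ must be a polynomial. The same estimate applied to $\varphi$ of degree $n$, where $M_{\varphi}(r) \sim |c_n| r^n$, forces $f_0 \circ \varphi$ to have order exactly $n$, so $n \leq 1$; nonconstancy then gives $\varphi(z) = cz + d$ with $c \neq 0$.

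It remains to pin down $c$. Since $f_0(cz+d)$ has exponential type $|c|a$, membership in $B^{2}_{a}$ immediately forces $|c| \leq 1$. Writing $c = \alpha + i\beta$, the same test function behaves on the real axis like
\[
|f_0(cx+d)|^2 \;\sim\; \frac{e^{2a|\beta| |x|}}{4\pi^{2}|c|^{2} x^{2}} \qquad (|x| \to \infty),
\]
which is $L^1(\mathbb{R})$ only when $\beta = 0$, forcing $c \in \mathbb{R}$. I expect the main obstacle to be the Borel--Carath\'eodory step in the previous paragraph: converting the growth of $|\varphi|$ into pointwise growth of $|\mathrm{Im}\,\varphi|$ on circles $|z| = r$ is what drives the whole argument, and it is the only place where one must go beyond routine order/type bookkeeping and a change of variables.
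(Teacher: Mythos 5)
Your overall strategy is sound, but it is a genuinely different route from the one the paper takes. The paper does not prove Theorem~\ref{Chacon} at all --- it quotes it from Chac\'on--Chac\'on--Gim\'enez --- and the closest argument it does give (the proof of Theorem~\ref{JaSa3}) first invokes P\'olya's theorem on compositions of entire functions to conclude that $\varphi$ is a polynomial, and only then uses the growth inequality $M_{f\circ\varphi}(r)\geq M_f\bigl(cM_\varphi(r/2)\bigr)$ to force the degree down to one. You instead extract everything from the single test function $f_0(z)=\sin(az)/(\pi z)$, using $|\sin w|\asymp\tfrac12 e^{|\mathrm{Im}\,w|}$ together with a Borel--Carath\'eodory lower bound $\max_{|z|=r}|\mathrm{Im}\,\varphi(z)|\gtrsim M_\varphi(r/2)$ (valid because $\mathrm{Im}\,\varphi$ is unbounded above for nonconstant entire $\varphi$, so the inequality applies for large $r$). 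This is more self-contained --- it eliminates the transcendental case and bounds the degree in one stroke without citing P\'olya --- and it has the added virtue of actually proving the constraints $|c|\leq 1$ and $c\in\mathbb{R}$, which the paper's analogous proof never addresses. The Borel--Carath\'eodory step you flag as the main obstacle does go through, so the necessity half is essentially complete modulo writing out that estimate.

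One genuine, though small, gap: in the sufficiency direction your claimed identity $\|C_\varphi f\|_2^2=|c|^{-1}\|f\|_2^2$ is only correct when $d\in\mathbb{R}$. The theorem places no restriction on $d$, and for $d=p+iq$ with $q\neq 0$ the substitution $y=cx+d$ moves the integration onto the horizontal line $\mathbb{R}+iq$, so you get $|c|^{-1}\int_{\mathbb{R}}|f(x+iq)|^2\,dx$ rather than $|c|^{-1}\|f\|_2^2$. You then need the Plancherel--P\'olya inequality $\int_{\mathbb{R}}|f(x+iq)|^2\,dx\leq e^{2a|q|}\|f\|_2^2$ for $f$ of exponential type $a$ --- exactly the tool the paper uses in its proof of Theorem~\ref{JaSa4} --- so boundedness survives but the exact norm $\|C_\varphi\|=|c|^{-1/2}$ does not in general. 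The rest of your argument, including the non-integrability of $|f_0(cx+d)|^2$ when $\mathrm{Im}\,c\neq 0$, is correct.
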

Since the class of warps preserving Paley-Wiener spaces are limited to translations and dilations, that is composition with affine maps only, it would be interesting to ask which weighted composition operators have the same property. As the Paley-Wiener spaces are Fourier spaces, these weighted composition operators naturally correspond to the smearing and warping of the original bandlimited signals.
That is, suppose $T=M_mC_\varphi$ and $T$ acts on $B^2_a$. We would like to know if $\varphi$ is affine, for which $m$, $Tf \in B^2_A$ for some $A$ and for all $f \in B^2_a$. The following theorem answers this question.

\begin{thm}\label{Chacon-cor-weight}
\begin{enumerate}
  \item[(a)] Let $\varphi :\mathbb{C}\rightarrow \mathbb{C}$ be a nonconstant entire function. Then $C_{\varphi}:B^{2}_{a}\rightarrow B^{2}_{|c|a}$ if and only if $\varphi (z)=cz+d$, $z\in \mathbb{C}$, with $c\in \mathbb{R}\setminus\{0\}$.
  \item[(b)] Let $T=M_mC_\varphi$, with $\varphi (z)=cz+d$ and $c\in \mathbb{R}\setminus\{0\}$,  be a weighted composition operator on $B^2_a$, where $m$ is an entire function. Then $T:B^2_a\rightarrow B^2_A$ for $A=\max{\{\left|r-|c|a|,|s+|c|a\right|\}}$ if and only if $\widehat{m} \in C_o(\mathbb{R})$ with $supp(\widehat{m})\subseteq [r,s]$.
\end{enumerate}
\end{thm}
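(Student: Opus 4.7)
The plan is to derive (a) from Theorem~\ref{Chacon} by a dilation reduction, and then to attack (b) by transferring the problem to the Fourier side, where multiplication becomes convolution and the support conditions translate directly into the stated bandwidth $A$.

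For (a), the sufficient direction is essentially direct: if $\varphi(z)=cz+d$ with $c\in\mathbb{R}\setminus\{0\}$ and $f\in B^2_a$, then the standard exponential-type bound for Paley--Wiener functions gives $f\circ\varphi$ of exponential type at most $|c|a$, and the change of variable $u=cx+d$ yields $\|C_\varphi f\|_{L^2}^2=|c|^{-1}\|f\|_{L^2}^2$. For necessity I would rescale: the dilation $(S_\lambda f)(z):=f(\lambda z)$ is an isometric isomorphism $B^2_b\to B^2_{\lambda b}$ for $\lambda>0$, so whenever $C_\varphi(B^2_a)\subseteq B^2_A$ the choice $\lambda:=\max(1,A/a)$ makes $S_{1/\lambda}\circ C_\varphi$ send $B^2_a$ into $B^2_{A/\lambda}\subseteq B^2_a$; it is bounded by closed graph (using continuity of point evaluation on $B^2_a$) and coincides with $C_{\tilde\varphi}$ for $\tilde\varphi(z):=\varphi(z/\lambda)$. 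Theorem~\ref{Chacon} then forces $\tilde\varphi(z)=\tilde c\, z+\tilde d$ with $\tilde c\in\mathbb{R}$ and $|\tilde c|\le 1$, so $\varphi(w)=(\tilde c\lambda)\,w+\tilde d$ is affine with real nonzero slope.

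For (b), I would lean on the Paley--Wiener identification $B^2_b=\{g\in L^2(\mathbb{R}):\mathrm{supp}(\widehat g)\subseteq[-b,b]\}$. For sufficiency, if $\widehat m\in C_0(\mathbb{R})$ is supported in $[r,s]$, then $\widehat m\in L^1(\mathbb{R})$, so $m|_\mathbb{R}\in L^\infty(\mathbb{R})$ by Fourier inversion. For $f\in B^2_a$, part (a) places $C_\varphi f\in B^2_{|c|a}$, the product $Tf=m\cdot(C_\varphi f)$ lies in $L^2(\mathbb{R})$ and is entire, and the convolution theorem gives $\widehat{Tf}=(2\pi)^{-1/2}\,\widehat m\ast\widehat{C_\varphi f}$, whose support lies in $[r-|c|a,\,s+|c|a]\subseteq[-A,A]$. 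For necessity I would test $T$ on functions $f_\omega\in B^2_a$ whose Fourier transforms are narrow windows about $\omega\in(-a,a)$; then $\widehat{C_\varphi f_\omega}$ is a narrow window about $c\omega\in(-|c|a,|c|a)$, and $\widehat{Tf_\omega}$ is essentially $\widehat m$ translated by $c\omega$. Imposing $\mathrm{supp}(\widehat{Tf_\omega})\subseteq[-A,A]$ for every such $\omega$ and letting the window width shrink forces $\mathrm{supp}(\widehat m)\subseteq[|c|a-A,\,A-|c|a]\subseteq[r,s]$.

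The main technical obstacle lies in this last step: a priori $m$ is only assumed entire, so $\widehat m$ exists at best as a compactly supported tempered distribution, and one must upgrade it to a continuous function vanishing at infinity. To do so I would invoke the closed graph theorem to make $T:B^2_a\to B^2_A$ bounded, then test against normalized reproducing kernels $k_w^a/\|k_w^a\|$ (which concentrate near arbitrary points of $\mathbb{R}$) to obtain local $L^\infty$ control on $m|_\mathbb{R}$; combined with the compact spectral support already established, this yields $m\in L^1(\mathbb{R})$, and Riemann--Lebesgue concludes $\widehat m\in C_0(\mathbb{R})$. One must also take care that the formal identity $\widehat{Tf}=(2\pi)^{-1/2}\widehat m\ast\widehat{C_\varphi f}$ is justified distributionally before reading off its support.
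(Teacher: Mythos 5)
Your part (a) is correct. The sufficiency argument (change of variables plus the dilation-of-type computation) is the same as the paper's; for necessity the paper simply invokes P\'olya's theorem directly (if $C_\varphi(B^2_a)\subseteq B^2_A$ for any $A$ then $\varphi$ is affine), whereas you conjugate by a dilation to reduce to Theorem~\ref{Chacon}. Your reduction is valid (the map $S_{1/\lambda}C_\varphi=C_{\tilde\varphi}$ with $\tilde\varphi(z)=\varphi(z/\lambda)$ does land in $B^2_a$ and is closed, hence bounded), and it buys you a self-contained deduction from the quoted theorem rather than from its underlying P\'olya-type growth argument; the two routes are essentially equivalent in content. Your sufficiency argument for (b) also matches the paper's ($\widehat m\in C_0$ with compact support gives $\widehat m\in L^1$, hence $m\in L^\infty(\mathbb{R})$, hence $Tf\in L^2$ with $\mathrm{supp}(\widehat m\ast\widehat{C_\varphi f})\subseteq[r-|c|a,\,s+|c|a]\subseteq[-A,A]$).

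The necessity direction of (b) is where your plan breaks down, in two places. First, your window-testing argument, carried out correctly, yields $\mathrm{supp}(\widehat m)\subseteq[\,|c|a-A,\;A-|c|a\,]$; but with $A=\max\{|r-|c|a|,|s+|c|a|\}$ this interval \emph{contains} $[r,s]$ rather than being contained in it, so it does not give $\mathrm{supp}(\widehat m)\subseteq[r,s]$. Second, the proposed upgrade to $\widehat m\in C_0(\mathbb{R})$ fails: from boundedness of $m$ on $\mathbb{R}$ together with compact spectral support you cannot conclude $m\in L^1(\mathbb{R})$ (the function $\sin x/x$ is bounded, entire, and has compactly supported Fourier transform but is not integrable), so Riemann--Lebesgue is not available. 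In fact this step cannot be repaired, because the ``only if'' assertion is false as stated: taking $\widehat m=\chi_{[r,s]}$ gives a bounded entire $m$ for which $T=M_mC_\varphi$ does map $B^2_a$ into $B^2_A$, yet $\widehat m$ is not continuous and hence not in $C_0(\mathbb{R})$. For comparison, the paper's own converse is a two-line sketch that only asserts (without the $C_0$ claim, and without locating the support in $[r,s]$) that $\widehat m$ must have compact support; your attempt is more honest about the distributional issues but the route you chose to resolve them does not go through.
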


\begin{proof}
   (a) Firstly,   note that if $f(z)$ is an entire function with order $\rho$ and type $\sigma$, then the order and type of $f(cz)$ are $\rho$ and $|c|^{\rho}\sigma$. To see this,
    suppose $f(z)=\ds \sum_{n=0}^{\infty} a_{n}z^{n}$. Then the order and type of $f$ can be determined by the following two formulas: (see ~\cite{Le96})
    \begin{center}
    $\rho=\ds \limsup_{n\rightarrow \infty} \frac{n\log n}{\log(1/|a_{n}|)}$,
    $\sigma=\frac{1}{\rho e} \ds \limsup_{n\rightarrow \infty} (n\sqrt[n]{|a_{n}|^{\rho}})$.
    \end{center}
    Let $\rho_{1}$ and $\sigma_{1}$ be the order and type of $f(cz)$. Then,
    \begin{center}
    $\rho_{1}=\ds \limsup_{n\rightarrow \infty} \frac{n\log n}{\log(1/|a_{n}c^{n}|)}=\rho$,
    \end{center}
    and,
    \begin{center}
    $\sigma_{1}=\frac{1}{\rho e} \ds \limsup_{n\rightarrow \infty} (n\sqrt[n]{|a_{n}c^{n}|^{\rho}})=|c|^{\rho}\sigma$.
    \end{center}
Now, to prove the theorem, suppose
$f\in B^{2}_{a}$, then $f$ is an entire function of exponential type less than or equal to $a$. It is clear from Theorem \ref{Chacon} that the only part we need to consider in this proof is when $\varphi (z) = cz$. Now,
\begin{center}
$\ds \int_{\mathbb{R}}|C_{\varphi}f(x)|^{2}dx = \ds \int_{\mathbb{R}}|f(cx)|^{2}dx=\frac{1}{|c|}\ds \int_{\mathbb{R}}|f(x)|^{2}dx< \infty$.
\end{center}
Also, from above, $C_{\varphi}f$ is of exponential type less than or equal to $|c|a$. Hence $C_{\varphi}f \in B^{2}_{|c|a}$.
The converse is also true because, using P\'{o}lya's theorem (~\cite{ChChGi07}, ~\cite{Po26}), it follows that if $C_{\varphi}(B^{2}_{a}) \subseteq B^{2}_{A}$, for any $A$, then $\varphi$ is affine.

(b) Since $\varphi (z)=cz+d$, it is clear from part(a) that $C_{\varphi}:B^2_a\rightarrow B^2_{|c|a}$. Therefore $supp(\widehat{C_\varphi f}) \subseteq [-|c|a , |c|a]$. Now suppose $\widehat{m} \in C_o(\mathbb{R})$, with $supp(\widehat{m}) \subseteq [r , s]$. Then $supp(\widehat{m}\ast \widehat{C_\varphi f}) \subseteq [r-|c|a , s+|c|a]$. Since $\widehat{m} \in C_o(\mathbb{R})$, $\widehat{m} \in L^1(\mathbb{R})$. Also, since $\widehat{m}$ is defined, $m\in L^1(\mathbb{R})$ and then by the inverse Fourier transform formula,
\begin{equation}
m(x) \;=\; \ds \int_{-\infty}^{\infty} \widehat{m}(t) e^{-ixt}dt,\nonumber
\end{equation}
and this implies,
\begin{equation}
|m(x)| \leq \ds \int_{-\infty}^{\infty} |\widehat{m}(t)|dt \;=\; \|\widehat{m}\|_1 < \infty.\nonumber
\end{equation}
Hence, $m\in L^\infty(\mathbb{R})$, and therefore $Tf\;=\;M_mC_\varphi f \in L^2(\mathbb{R})$. Hence for all $f\in B^2_a$, $Tf \in B^2_A$ for $A=\max{\{|r-|c|a|,|s+|c|a|\}}$.\\
Conversely, suppose $T:B^2_a\rightarrow B^2_A$ for some $A$. If $\widehat{m}$ doesn't have compact support, then for some $f\in B^2_a$, $\widehat{m}\ast \widehat{C_\varphi f}$ will fail to have compact support. This is a contradiction to the fact that $Tf\;=\;M_mC_\varphi f \in B^2_A$ for some $A$.
\end{proof}

\section{Range of composition operators acting on Paley-Wiener spaces}

Theorem \ref{Chacon} demonstrates that Paley-Wiener spaces are rather rigid under diffeomorphisms of the underlying domain. However, nonlinear warps (e.g. chirps) occur in various applications and the bandlimited input signals are transported into subspaces of $ L^2 (\mathbb{R}) $. Characterization of these subspaces would lead to appropriate approximation theorems which allows for proper representation of the output signals. To describe these range spaces, we recall the following well known theorem (see ~\cite{Si76}, for example).

\begin{thm}\label{Singh}
Suppose $(X,\Sigma,\lambda)$ is a $\sigma$-finite measure space and $\varphi$ is a measurable function on $X$ into itself. Then $C_{\varphi}$ is a bounded composition operator on $L^{2}(\lambda)$ if and only if there exists a constant $c>0$ such that $\lambda \varphi ^{-1}(E) \leq c \lambda (E)$ for every measurable set $E$. Here $\varphi ^{-1}(E)$ is the pull-back of the set $E$.
\end{thm}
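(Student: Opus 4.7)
The plan is to prove both directions by working with the pushforward measure $\mu := \lambda \circ \varphi^{-1}$ defined by $\mu(E) = \lambda(\varphi^{-1}(E))$. The hypothesis $\lambda \varphi^{-1}(E) \leq c \lambda(E)$ for all measurable $E$ is precisely the statement that $\mu$ is absolutely continuous with respect to $\lambda$ with Radon--Nikodym derivative bounded by $c$, $\lambda$-a.e. Since $\lambda$ is $\sigma$-finite, this derivative exists.

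For the sufficiency direction, the heart of the argument is the change-of-variables identity
\begin{equation}
\int_X |f \circ \varphi|^2 \, d\lambda \;=\; \int_X |f|^2 \, d\mu, \nonumber
\end{equation}
which I would first verify for indicators $f = \chi_E$ (where both sides equal $\lambda(\varphi^{-1}(E))$), extend to nonnegative simple functions by linearity, and then to arbitrary nonnegative measurable $f$ by the monotone convergence theorem. Combining this with the bound $d\mu / d\lambda \leq c$ gives $\|C_\varphi f\|_2^2 \leq c\, \|f\|_2^2$, establishing boundedness with $\|C_\varphi\| \leq \sqrt{c}$.

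For the necessity direction, suppose $C_\varphi$ is bounded on $L^2(\lambda)$. For any measurable $E$ with $\lambda(E) < \infty$, the function $\chi_E$ lies in $L^2(\lambda)$, and $\chi_E \circ \varphi = \chi_{\varphi^{-1}(E)}$; hence
\begin{equation}
\lambda(\varphi^{-1}(E)) \;=\; \|C_\varphi \chi_E\|_2^2 \;\leq\; \|C_\varphi\|^2\, \|\chi_E\|_2^2 \;=\; \|C_\varphi\|^2 \lambda(E), \nonumber
\end{equation}
so $c := \|C_\varphi\|^2$ works for such $E$. For sets of infinite measure the inequality is trivially satisfied, and $\sigma$-finiteness is not required here.

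The main conceptual subtlety is the well-definedness of $C_\varphi$ as an operator on \emph{equivalence classes} in $L^2(\lambda)$: one needs $\lambda \circ \varphi^{-1} \ll \lambda$, otherwise $f = 0$ $\lambda$-a.e.\ would not force $f \circ \varphi = 0$ $\lambda$-a.e. This is automatic in the sufficiency direction and is implicit whenever one speaks of $C_\varphi$ acting on $L^2(\lambda)$; I would flag this at the outset. The $\sigma$-finiteness hypothesis is invoked exactly once, to legitimize the Radon--Nikodym step in the sufficiency direction, so this is where I expect the most care is needed in a fully rigorous write-up.
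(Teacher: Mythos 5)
Your proof is correct, but note that the paper itself gives no proof of this statement: it is quoted as a ``well known theorem'' with a citation to Singh \cite{Si76}, so there is no in-paper argument to compare against. Your argument is the standard one from the composition-operator literature, and both directions are sound: the change-of-variables identity $\int_X |f\circ\varphi|^2\,d\lambda = \int_X |f|^2\,d(\lambda\circ\varphi^{-1})$ via indicators, simple functions, and monotone convergence; and the necessity via $\|C_\varphi\chi_E\|_2^2 = \lambda(\varphi^{-1}(E))$ for sets of finite measure, with the infinite-measure case trivial. You are also right to flag well-definedness of $C_\varphi$ on equivalence classes (i.e., $\lambda\circ\varphi^{-1}\ll\lambda$) as the one conceptual prerequisite. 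A small simplification: the Radon--Nikodym derivative is not actually needed for sufficiency, since the set inequality $\lambda\varphi^{-1}(E)\le c\,\lambda(E)$ passes directly to $\int g\,d(\lambda\circ\varphi^{-1})\le c\int g\,d\lambda$ for nonnegative $g$ by the same simple-function approximation; done this way, $\sigma$-finiteness is not invoked in either direction (it is, however, genuinely needed elsewhere in the theory, e.g.\ to realize $C_\varphi^*C_\varphi$ as the multiplication operator $M_{f_{\varphi}}$ used later in the paper).
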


This is equivalent to saying that $C{_\varphi}:L^{2}(\lambda)\rightarrow L^{2}(\lambda)$ if and only if $\lambda \varphi ^{-1}$ is absolutely continuous with respect to $\lambda$ and the Radon-Nikod\'{y}m derivative of $\lambda \varphi ^{-1}$ with respect to $\lambda$ is essentially bounded.  Clearly this will serve as a necessary condition for the theorems to follow. Before, we provide a much deeper characterization of the range spaces of non-affine warps (or affine warps for that matter), we note an elementary result which arises from Plancherel's theorem.  This observation shows that if the support of the bandlimited functions is loosened, then the range spaces is arbitrarily close to larger Paley-Wiener spaces.  More precisely,

\begin{thm}\label{JaSa1}
Suppose $\varphi :\mathbb{R}\rightarrow \mathbb{R}$ is a measurable function that satisfies the hypothesis of Theorem \ref{Singh}, i.e., there exists a constant $c>0$ such that $m\varphi ^{-1}(E) \leq c m(E)$ for every measurable set $E$, where $m$ is the Lebesgue measure on the Borel subsets of $\mathbb{R}$. Let $f\in B^{2}_{a}$. Then for every $\varepsilon >0$, there exists a positive $N$ such that for every $A \geq N$ there exists an $h\in B^{2}_{A}$ with $\|C_{\varphi}f-h\| < \varepsilon$.
\end{thm}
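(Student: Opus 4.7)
The plan is to reduce the problem to the density of $\bigcup_{A>0} B^2_A$ in $L^2(\mathbb{R})$. By the hypothesis on $\varphi$, Theorem \ref{Singh} tells us that $C_\varphi$ is bounded on $L^2(\mathbb{R})$, and since $B^2_a \subset L^2(\mathbb{R})$, the function $g := C_\varphi f$ therefore lies in $L^2(\mathbb{R})$. This is the one place where the hypothesis on $\varphi$ is actually used.

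Next, I would pass through the Fourier transform. Let $\widehat{g} \in L^2(\mathbb{R})$ denote the Fourier transform of $g$ (in the Plancherel sense). For each $A > 0$, define $h_A$ to be the $L^2$-inverse Fourier transform of the truncation $\widehat{g}\,\chi_{[-A,A]}$. Because this truncation is an $L^2$ function supported in $[-A,A]$, the Paley-Wiener theorem extends $h_A$ to an entire function of exponential type at most $A$ whose restriction to $\mathbb{R}$ is $L^2$; that is, $h_A \in B^2_A$.

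Finally, I would estimate the approximation error using Plancherel's identity:
\[
\|C_\varphi f - h_A\|^2 \;=\; \|\widehat{g} - \widehat{g}\,\chi_{[-A,A]}\|^2 \;=\; \int_{|t|>A}|\widehat{g}(t)|^2\,dt.
\]
Since $|\widehat{g}|^2 \in L^1(\mathbb{R})$, the dominated convergence theorem forces this tail integral to tend to $0$ as $A \to \infty$. Given $\varepsilon > 0$, picking $N$ large enough that the tail is below $\varepsilon^2$ yields $\|C_\varphi f - h_A\| < \varepsilon$ for every $A \geq N$, which is the required conclusion.

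I do not expect any genuine obstacle; the statement is essentially a restatement of the density of the increasing family $\{B^2_A\}_{A>0}$ in $L^2(\mathbb{R})$. The only subtle point worth flagging is that $\varphi$ here is only assumed to satisfy Singh's measure-theoretic condition — it need not be affine, smooth, or even continuous — so the argument must not invoke Theorems \ref{Chacon} or \ref{Chacon-cor-weight}. Singh's theorem is used purely to guarantee that $C_\varphi f$ is an element of $L^2(\mathbb{R})$ to which Plancherel and Fourier truncation can be applied.
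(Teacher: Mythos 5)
Your proposal is correct and follows essentially the same route as the paper: both obtain $C_\varphi f\in L^2(\mathbb{R})$ from Singh's theorem, define $h$ as the inverse Fourier transform of the truncation $\widehat{C_\varphi f}\,\chi_{[-A,A]}$ (hence $h\in B^2_A$ by Paley--Wiener), and conclude via Plancherel that the error equals the $L^2$ tail of $\widehat{C_\varphi f}$ outside $[-A,A]$, which vanishes as $A\to\infty$. Your version is in fact slightly cleaner, since the paper verifies $\widehat{h}=\chi_{[-A,A]}\widehat{C_\varphi f}$ by an informal delta-function computation that you avoid.
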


\begin{proof}
Suppose $\varphi :\mathbb{R}\rightarrow \mathbb{R}$ is a measurable function and there is $c>0$ such that $m\varphi ^{-1}(E) \leq c m(E)$ for every measurable set $E \subseteq \mathbb{R}$, where $m$ is the Lebesgue measure on the Borel subsets of $\mathbb{R}$. Let $f \in B^{2}_{a}$ be an arbitrary function. Then since $f\in L^{2}(\mathbb{R})$, by Theorem \ref{Singh} it follows that $C_{\varphi}f \in L^{2}(\mathbb{R})$. Define,
\begin{center}
$h(z):= \ds \int_{-A}^{A} \widehat{C_{\varphi}f}(t)e^{-izt} dt$
\end{center}
Then it is clear that $h\in B^{2}_{A}$.\\
We will show that, $\widehat{h}(t)=(\chi_{A}\widehat{C_{\varphi}f})(t)$, $\forall t \in \mathbb{R}$, where $\chi_{A}$ is the characteristic function of $[-A,A]$.\\
\begin{eqnarray}
\widehat{h}(t) &=& \ds \int_{-\infty}^{\infty} h(x) e^{ixt}dx\nonumber\\
&=& \ds \int_{-\infty}^{\infty} \left\{\ds \int_{-A}^{A}\widehat{C_{\varphi}f}(s)e^{-ixs}ds\right\} e^{ixt}dx\nonumber\\
&=& \ds \int_{-A}^{A} \widehat{C_{\varphi}f}(s)\left\{\ds \int_{-\infty}^{\infty}e^{ix(t-s)}dx\right\}ds\nonumber\\
&=& \ds \int_{-A}^{A} \widehat{C_{\varphi}f}(s)\delta (t-s)ds\nonumber\\
&=& \ds \left\{\begin{array}{lcr}
 \widehat{C_{\varphi}f}(t) &\text{if}\quad t\in [-A,A]\\
 0 &\text{otherwise}\\
\end{array}\right.\nonumber\end{eqnarray}
Therefore using Plancherel's theorem we have:
\begin{eqnarray}
\|C_{\varphi}f-h\| &=& \|\widehat{C_{\varphi}f}-\widehat{h}\| \nonumber\\
 &=& \|\widehat{C_{\varphi}f}\|_{L^2([-A,A]^c)},\nonumber
\end{eqnarray}
which goes to zero as $A\rightarrow +\infty$. Hence the statement of the theorem is proved.
\end{proof}
As a consequence if $\varphi :\mathbb{C}\rightarrow \mathbb{C}$ is an entire function, whose restriction to the real line is real and satisfies all the hypothesis of Theorem \ref{JaSa1}, then the composition operator induced by $\varphi$ will satisfy the conclusion of the above theorem.
\begin{ex}[\cite{Si76}, \cite{SiMa93}]
Denote $\varphi_{\mathbb{R}}$ as the restriction of $\varphi$ on $\mathbb{R}$. If $\varphi: \mathbb{C} \rightarrow \mathbb{C}$ is an entire function with $\varphi_{\mathbb{R}}:\mathbb{R} \rightarrow \mathbb{R}$ monotone and $\frac{1}{\varphi '_{\mathbb{R}}}$ essentially bounded, then the composition operator induced by $\varphi$ satisfies the conclusion of Theorem \ref{JaSa1}. An example of such a function is $\varphi(z)=az^{3}+bz^{2}+cz+d$, where $a,b,c,d \in \mathbb{R}$ with $b^{2}<3ac$.
\end{ex}

Now we will show that the range spaces are reproducing kernel Hilbert spaces with reproducing kernel generated by the reproducing kernel of $B^{2}_{a}$. These reproducing kernels form a basis for the range spaces and may be used to provide best approximation results for the images.  We know that the inner product of $B^{2}_{a}$ is defined by $\left(f(\cdot) \; , \; g(\cdot)\right)_{B^{2}_{a}}\; = \; \ds \int_{\mathbb{R}} f(t)\overline{g(t)}dt$. Suppose $\mathfrak{F}(\mathbb{C})$ denotes the linear space of all complex valued functions on $\mathbb{C}$. Consider the composition map $C_{\varphi}:B^{2}_{a}\rightarrow \mathfrak{F}(\mathbb{C})$. Let $h_\varphi$ be a mapping from $\mathbb{C}$ into $B^{2}_{a}$ such that $\left(C_{\varphi}f\right)(z)\; = \; \left(f(\cdot) \; , \; h_\varphi(z,\cdot)\right)_{B^{2}_{a}}$ for all $f\in B^{2}_{a}$.

But we know, $\ds f(z) = \left(f(\cdot) \; , \; \frac{\sin a(\cdot - \overline{z})}{\pi(\cdot - \overline{z})}\right)_{B^{2}_{a}}$. This implies, $\ds C_{\varphi}f(z)\; = \; f(\varphi(z))\; = \; \left(f(\cdot) \; , \; \frac{\sin a(\cdot - \overline{\varphi(z)})}{\pi(\cdot - \overline{\varphi(z)})}\right)_{B^{2}_{a}}$. Hence we have, $\ds h_\varphi(z,\cdot)\; = \; \frac{\sin a(\cdot - \overline{\varphi(z)})}{\pi(\cdot - \overline{\varphi(z)})}$. Then by Saitoh's theorem (see ~\cite{Sa97}), the range space $ran(C_{\varphi})$ is a reproducing kernel Hilbert space with the reproducing kernel given by,
\begin{eqnarray}
K^{(\varphi)}(z,w) &=& \left(h_\varphi(w,\cdot) \; , \; h_\varphi(z,\cdot)\right)_{B^{2}_{a}}\nonumber\\
&=& \ds \left(\frac{\sin a(\cdot - \overline{\varphi (w)})}{\pi(\cdot - \overline{\varphi (w)})} \; , \; \frac{\sin a(\cdot - \overline{\varphi (z)})}{\pi(\cdot - \overline{\varphi (z)})}\right)_{B^{2}_{a}}\nonumber\\
&=& \ds \int_{-\infty}^{\infty} \frac{\sin a(t - \overline{\varphi (w)})}{\pi(t - \overline{\varphi (w)})} \frac{\sin a(t - \varphi (z))}{\pi(t - \varphi (w))}dt\nonumber\\
&=& \ds \int_{-a}^{a} e^{-it\overline{\varphi (w)}}e^{it\varphi (z)}dt\nonumber\\
&=& \ds \frac{\sin a(\varphi (z) - \overline{\varphi (w)})}{\pi(\varphi (z) - \overline{\varphi (w)})}.\nonumber\end{eqnarray}

Then we can introduce the inner product in $ran(C_{\varphi})$ in the following manner:\\
It is well-known that the set $\{K^{(\varphi)}(\cdot,n)\}_{n\in \mathbb{Z}}$ forms an orthonormal basis in $ran(C_{\varphi})$. Then for any $F, G \in ran(C_{\varphi})$, there exist $\{a_n\}_{n\in \mathbb{Z}}, \{b_n\}_{n\in \mathbb{Z}}$ in $\mathbb{C}$ such that $\ds F\;=\; \sum_n a_n K^{(\varphi)}(\cdot,n)$ and $\ds G\;=\; \sum_n b_n K^{(\varphi)}(\cdot,n)$. Then the inner product can be written as,
\begin{equation}
\left(F\;,\;G\right)_{ran(C_{\varphi})} \;=\; \left(\sum_n a_n K^{(\varphi)}(\cdot,n)\; , \;\sum_m b_m K^{(\varphi)}(\cdot,m)\right)_{ran(C_{\varphi})}\; = \; \sum_{m,n} \overline{b_m} a_n K^{(\varphi)}(m,n)\nonumber
\end{equation}
Since $C_{\varphi}:B^{2}_{a}\rightarrow ran(C_{\varphi})$ is a linear and bounded operator, by the closed graph theorem, $C_{\varphi}$ is a closed operator.

\begin{ex}
Suppose $\varphi(z)=z^{3}+z$. Then $C_{\varphi}:B^{2}_{a}\rightarrow ran(C_{\varphi})\subset L^{2}(\mathbb{R})$ and $ran(C_{\varphi})$ is a reproducing kernel Hilbert space with reproducing kernel $\ds K^{(\varphi)}(z,w)\;=\; \frac{\sin a(z^{3}+z-\overline{w}^{3}-\overline{w})}{\pi(z^{3}+z-\overline{w}^{3}-\overline{w})}$.
\end{ex}

The following theorem will provide the characterization of the range space of $C_{\varphi}$ and establishes a norm on these range spaces relative to which $C_\varphi $ acts isometrically. In particular, a basis for
the range spaces is constructed which provides the best approximation of the image maps.

\begin{thm}
Let $\varphi :\mathbb{C}\rightarrow \mathbb{C}$ be an entire function, whose restriction to the real line, $\varphi_{\mathbb{R}}$ maps $\mathbb{R}$ into $\mathbb{R}$. Suppose there exists a constant $c>0$ such that $m\varphi_{\mathbb{R}} ^{-1}(E) \leq c m(E)$ for every measurable set $E$, where $m$ is the Lebesgue measure on the Borel subsets of $\mathbb{R}$. In addition, suppose $m\; \ll \; m\circ \varphi_{\mathbb{R}}^{-1}$, i.e., $m\; \sim \; m\circ \varphi_{\mathbb{R}}^{-1}$ ($m$ and $m\circ \varphi_{\mathbb{R}}^{-1}$ are mutually absolutely continuous). Then the following are true:
\begin{enumerate}
\item[(a)] $C_{\phi}$ is a bijection from $B^{2}_{a}$ onto $ran(C_\varphi)$.
\item[(b)] $ran(C_{\varphi})$ is a reproducing kernel Hilbert space with norm defined by
\begin{center}
$\|C_{\varphi}f\|_{ran(C_{\varphi})}\; = \; \|f\|_{B^{2}_{a}}$, \quad $\forall f \in B^{2}_{a}$.
\end{center}
That is $C_{\varphi}$ is an isometry.
\end{enumerate}
\end{thm}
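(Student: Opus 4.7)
The plan is to prove injectivity of $C_\varphi$ first, after which part (a) is immediate and part (b) follows from the Saitoh pullback construction already assembled in the preceding discussion. For injectivity, suppose $f,g\in B^2_a$ with $C_\varphi f = C_\varphi g$ in $L^2(\mathbb{R})$, and set $F=f-g$. Since $F\circ\varphi$ is entire and vanishes almost everywhere on $\mathbb{R}$, it vanishes identically on $\mathbb{R}$, so $F$ vanishes on the image set $S := \varphi_{\mathbb{R}}(\mathbb{R})$. Because $\varphi_{\mathbb{R}}$ is continuous, $S$ is $F_\sigma$ and the complement $E := \mathbb{R}\setminus S$ is Borel. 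Now $\varphi_{\mathbb{R}}^{-1}(E)=\emptyset$, so $m(\varphi_{\mathbb{R}}^{-1}(E))=0$, and the hypothesis $m\ll m\circ\varphi_{\mathbb{R}}^{-1}$ forces $m(E)=0$. Thus $S$ has full Lebesgue measure in $\mathbb{R}$, in particular contains accumulation points, and the identity principle for entire functions yields $F\equiv 0$. This proves that $C_\varphi$ is injective, hence a bijection onto $\mathrm{ran}(C_\varphi)$, giving (a).

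For part (b), I transport the Hilbert space structure of $B^2_a$ along the now-bijective operator $C_\varphi$ by defining
\begin{equation*}
(C_\varphi f_1,\, C_\varphi f_2)_{\mathrm{ran}(C_\varphi)} := (f_1, f_2)_{B^2_a}, \qquad \|C_\varphi f\|_{\mathrm{ran}(C_\varphi)} := \|f\|_{B^2_a}.
\end{equation*}
This is well defined by part (a) and makes $C_\varphi$ an isometry by construction. To verify that this coincides with the RKHS structure already computed with kernel $K^{(\varphi)}$, note that
\begin{equation*}
\bigl(C_\varphi h_\varphi(w,\cdot)\bigr)(z) \;=\; h_\varphi(w,\varphi(z)) \;=\; \frac{\sin a(\varphi(z)-\overline{\varphi(w)})}{\pi(\varphi(z)-\overline{\varphi(w)})} \;=\; K^{(\varphi)}(z,w),
\end{equation*}
so that for any $g = C_\varphi f \in \mathrm{ran}(C_\varphi)$ and any $w$,
\begin{equation*}
\bigl(g,\, K^{(\varphi)}(\cdot,w)\bigr)_{\mathrm{ran}(C_\varphi)} = \bigl(C_\varphi f,\, C_\varphi h_\varphi(w,\cdot)\bigr)_{\mathrm{ran}(C_\varphi)} = \bigl(f,\, h_\varphi(w,\cdot)\bigr)_{B^2_a} = f(\varphi(w)) = g(w),
\end{equation*}
confirming the reproducing property and yielding the isometry claim of (b).

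The main obstacle is the measure-theoretic step: proving that the image $\varphi_{\mathbb{R}}(\mathbb{R})$ has full Lebesgue measure in $\mathbb{R}$, which is precisely where the mutual absolute continuity hypothesis is used in an essential way. Without it, $F$ could vanish on a ``thin'' image set and the identity theorem would be unavailable. Once this measure-theoretic input is in place, the rest is a formal consequence of the Saitoh pullback framework together with the reproducing kernel calculation for $B^2_a$ already carried out above.
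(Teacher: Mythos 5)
Your proof is correct, but the injectivity argument takes a genuinely different route from the paper's. The paper proves injectivity operator-theoretically: since $m\circ\varphi_{\mathbb{R}}^{-1}\ll m$, the Radon--Nikod\'ym derivative $f_{\varphi_{\mathbb{R}}}$ exists, the reverse absolute continuity $m\ll m\circ\varphi_{\mathbb{R}}^{-1}$ forces $f_{\varphi_{\mathbb{R}}}\neq 0$ a.e., and the identity $C_{\varphi_{\mathbb{R}}}^{\ast}C_{\varphi_{\mathbb{R}}}=M_{f_{\varphi_{\mathbb{R}}}}$ then shows $C_{\varphi_{\mathbb{R}}}$ is injective on $L^2(\mathbb{R})$, after which the identity theorem upgrades this to injectivity of $C_\varphi$ on $B^2_a$. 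You instead use the hypothesis $m\ll m\circ\varphi_{\mathbb{R}}^{-1}$ directly on the complement of the image set $S=\varphi_{\mathbb{R}}(\mathbb{R})$: since $\varphi_{\mathbb{R}}^{-1}(\mathbb{R}\setminus S)=\emptyset$, the set $\mathbb{R}\setminus S$ is $m$-null, so $F=f-g$ vanishes on a set of full measure and the identity theorem applies. This is more elementary (no adjoints, no multiplication operators) and makes transparent exactly what the mutual absolute continuity buys, namely that the warp's real image is metrically all of $\mathbb{R}$; the paper's route, by contrast, yields the stronger statement that $C_{\varphi_{\mathbb{R}}}$ is injective on all of $L^2(\mathbb{R})$, not just on $B^2_a$. (Incidentally, for entire $F$ and nonconstant $\varphi$ one could bypass the measure theory entirely: $F\circ\varphi\equiv 0$ on $\mathbb{C}$ forces $F$ to vanish on the open set $\varphi(\mathbb{C})$.) For part (b) the two arguments coincide in substance: the paper computes the Saitoh range norm $\|F\|_{ran(C_\varphi)}=\inf\{\|f\|:C_\varphi f=F\}=\|\mathcal{P}_{\mathcal{N}(C_\varphi)^{\perp}}f\|$ and uses triviality of the kernel, while you transport the inner product along the bijection, which gives the same norm; your explicit verification that $K^{(\varphi)}(\cdot,w)=C_\varphi h_\varphi(w,\cdot)$ reproduces point evaluations is a worthwhile addition that the paper leaves to its earlier citation of Saitoh's theorem.
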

\begin{proof}
\begin{enumerate}
\item[(a)] The first part of this proof is due to Singh and Manhas (See ~\cite{SiMa93}).\\
Since $m\circ \varphi_{\mathbb{R}}^{-1} \; \ll \; m$, the Radon-Nikodym derivative $f_{\varphi_{\mathbb{R}}}$ of $m\circ \varphi_{\mathbb{R}}^{-1}$ with respect to $m$ exists and we have
\begin{center}
$\ds m\varphi_{\mathbb{R}}^{-1}(E)=\int_{E}f_{\varphi_{\mathbb{R}}}dm$, \quad for every Borel set $E \subseteq \mathbb{R}$.
\end{center}
Now suppose there exists one such $E$ with $m(E) > 0$ such that $f_{\varphi_{\mathbb{R}}}=0$ on $E$. Then from the above equality we have $m\varphi_{\mathbb{R}}^{-1}(E)=0$, which is a contradiction to the fact that $m\; \ll \; m\circ \varphi_{\mathbb{R}}^{-1}$. Hence $f_{\varphi_{\mathbb{R}}}$ is different from zero almost everywhere.\\
Then the corresponding multiplication operator $M_{f_{\varphi_{\mathbb{R}}}}$ is an injection.\\
But we also know for $f,g\in L^{2}(\mathbb{R})$, 
\begin{eqnarray}
 \ds \left(C_{\varphi_{\mathbb{R}}}^{\ast}C_{\varphi_{\mathbb{R}}}f\;,\;g\right)\; &= &\;\left(C_{\varphi_{\mathbb{R}}}f\;,\;C_{\varphi_{\mathbb{R}}}g\right) \nonumber \\
 &=& \; \int f\; \overline{g}\;dm \varphi_{\mathbb{R}}^{-1} \nonumber \\
 &=& \int f_{\varphi_{\mathbb{R}}} f\; \overline{g}\;dm  \nonumber \\
 &=& \left(M_{f_{\varphi_{\mathbb{R}}}}f\;,\;g\right). \nonumber
 \end{eqnarray}
Hence $C_{\varphi_{\mathbb{R}}}^{\ast}C_{\varphi_{\mathbb{R}}} \;=\; M_{f_{\varphi_{\mathbb{R}}}}$ is an injection on $L^{2}(\mathbb{R})$ and this implies $C_{\varphi_{\mathbb{R}}}$ is an injection on $L^{2}(\mathbb{R})$. Since $B^{2}_a \subset L^{2}(\mathbb{R})$, $C_{\varphi_{\mathbb{R}}}$ is an injection on $B^{2}_{a}$. Now we claim that $C_{\varphi}$ is an injection on $B^{2}_{a}$.\\
\textit{Proof of claim}: Let $C_{\varphi}f\;=\;0$ for some $f\in B^{2}_{a}$. This implies,
\begin{eqnarray}
f(\varphi(z)) &=& 0\quad \forall z\in \mathbb{C},\nonumber\\
f(\varphi(x)) &=& 0\quad \forall x\in \mathbb{R},\nonumber\\
f(\varphi_{\mathbb{R}}(x)) &=& 0\quad \forall x\in \mathbb{R},\nonumber\\
C_{\varphi_{\mathbb{R}}}f(x) &=& 0\quad \forall x\in \mathbb{R},\nonumber\\
C_{\varphi_{\mathbb{R}}}f &=& 0\quad \text{on} \quad \mathbb{R},\nonumber\\
f &=& 0\quad \text{on} \quad \mathbb{R}.\nonumber
\end{eqnarray}
Therefore by the identity theorem, $f=0\quad \text{on} \quad \mathbb{C}$. Hence $C_{\phi}$ is an injection from $B^{2}_{a}$ onto $ran(C_\varphi)$.

\item[(b)] We know $\|F\|_{ran(C_{\varphi})} = \inf \{\|f\|_{B^{2}_{a}}:\; C_{\varphi}f=F\}$. This implies,
\begin{eqnarray}
\|F\|_{ran(C_{\varphi})} &=& \inf\{{\|f-g\|_{B^{2}_{a}}: g\in\mathcal{N}(C_{\varphi}), C_{\varphi}f=F}\}\nonumber\\
&=& \|\mathcal{P}_{\mathcal{N}(C_{\varphi})^{\bot}}f\|_{B^{2}_{a}}\quad (\because \mathcal{N}(C_{\varphi})\; \text{is closed})\nonumber\\
&=& \|f\|_{B^{2}_{a}}\quad (\because \mathcal{N}(C_{\varphi}) = {0})\nonumber
\end{eqnarray}
Here, $\mathcal{N}(C_{\varphi})$ is the null space of $C_{\varphi}$ and $\mathcal{P}_{\mathcal{N}(C_{\varphi})^{\bot}}$ is the orthogonal projection from $B^{2}_{a}$ onto the orthogonal complement of $\mathcal{N}(C_{\varphi})$ in $B^{2}_{a}$. Hence $\|C_{\varphi}f\|_{ran(C_{\varphi})} \;=\; \|f\|_{B^{2}_{a}}$. This completes the proof.
\end{enumerate}
\end{proof}
\section{Bounded composition operators on de Branges-Rovnyak Spaces}

Let $\Pi^{+}\subset \mathbb{C}$ be the upper half of the complex plane. Let $g(z)$ be an entire function satisfying $|g(\overline{z})|<|g(z)|$, $\forall z \in \Pi^{+}$. The \textit{de Branges-Rovnyak space} $H(g)$ is the space of all entire functions $f(z)$ satisfying the following conditions:
\begin{center}
    \begin{enumerate}
        \item $\|f\|^{2}_{H(g)}=\ds \int_{-\infty}^{+\infty}\left|\frac{f(t)}{g(t)}\right|^{2}dt< \infty$,
        \item Both ratios $\frac{f(z)}{g(z)}$ and $\frac{\overline{f(\overline{z})}}{g(z)}$ are of bounded type and of nonpositive mean type in $\Pi^+$.
    \end{enumerate}
\end{center}
$H(g)$ is a Hilbert space with the norm defined above. It is well-known that $H(g)$ is a reproducing kernel Hilbert space with reproducing kernel given by $k_{w}(z)= \frac{i}{2}\frac{g(z)\overline{g(w)}-\overline{g(\overline{z})}g(\overline{w})}{\pi (z-\overline{w})}$.\\
Note that the Paley-Wiener space $B^{2}_{a}$ is the space $H(g)$ with $g(z)=e^{-iaz}$.

In the following two cases we characterize the bounded composition operators on $H(g)$ by imposing different conditions on the function $g$.

\noindent{\underline{\bf Case I}: When $g$ is of exponential type.}

\begin{lem}\label{JaSalem1}
Suppose $g$ is an entire function of exponential type $\sigma$ satisfying $|g(\overline{z})|<|g(z)|$, $\forall z \in \Pi^{+}$. Then an analytic function $f$ is of exponential type less than or equal to $\sigma$ if and only if $\frac{f(z)}{g(z)}$ and $\frac{\overline{f(\overline{z})}}{g(z)}$ are of nonpositive mean type in $\Pi^+$.
\end{lem}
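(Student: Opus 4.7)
The plan is to reduce the lemma to M.\,G.~Kre\v{\i}n's theorem recalled in Section~1---an entire function is of exponential type if and only if it is of bounded type in both $\Pi^+$ and $\Pi^-$, in which case the exponential type equals $\max\{h^+,h^-\}$---together with the standard fact that mean type is additive on products (and thus subtractive on quotients) of bounded-type functions, since on such functions the defining $\limsup$ is actually a limit. First I would unpack the hypothesis on $g$. The strict inequality $|g(\overline z)|<|g(z)|$ on $\Pi^+$ rules out zeros of $g$ in $\Pi^+$ (a zero would force the right side to vanish while the left side remains nonnegative), so $f/g$ is genuinely holomorphic there. Specializing to $z=iy$ with $y>0$ and passing to $\limsup_{y\to\infty}\tfrac{1}{y}\log(\cdot)$ gives $h_g^-\le h_g^+$, so by Kre\v{\i}n the exponential type of $g$ coincides with $\sigma=h_g^+$.

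For the forward direction, suppose $f$ is entire of exponential type $\le\sigma$. Kre\v{\i}n's theorem gives $f$ of bounded type in each half plane with $h_f^{\pm}\le\sigma$, so the quotient $f/g$ is of bounded type in $\Pi^+$ and additivity of mean type yields
\[
h(f/g)\big|_{\Pi^+}=h_f^+-h_g^+\le\sigma-\sigma=0.
\]
For the second ratio, note that $\overline{f(\overline z)}$ is again entire and $|\overline{f(\overline{iy})}|=|f(-iy)|$, so its mean type in $\Pi^+$ equals $h_f^-$; the same subtraction then gives that $\overline{f(\overline z)}/g(z)$ has nonpositive mean type in $\Pi^+$.

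The converse runs in reverse. Assuming both ratios are of bounded type and of nonpositive mean type in $\Pi^+$, multiplication by $g$ (bounded type with mean type $\sigma$) shows that $f=(f/g)\cdot g$ is of bounded type in $\Pi^+$ with $h_f^+\le\sigma$, and that $\overline{f(\overline z)}$ is of bounded type in $\Pi^+$ with mean type $\le\sigma$---equivalently, by the identity above, $f$ is of bounded type in $\Pi^-$ with $h_f^-\le\sigma$. Kre\v{\i}n's theorem then delivers that $f$ is entire of exponential type $\le\sigma$. The main obstacle is not a deep idea but careful bookkeeping: keeping straight the identification between the mean type in $\Pi^+$ of $z\mapsto\overline{f(\overline z)}$ and the mean type in $\Pi^-$ of $f$, and invoking mean-type arithmetic only after bounded type has been verified. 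The nonvanishing of $g$ on $\Pi^+$ extracted from the strict inequality is precisely what makes the quotient interpretations unambiguous.
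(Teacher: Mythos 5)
Your proposal is correct and follows essentially the same route as the paper: both arguments identify $\sigma$ with the mean type of $g$ in $\Pi^+$ using the hypothesis $|g(\overline z)|<|g(z)|$ together with Kre\v{\i}n's theorem, use the subtractivity of mean type on the quotients $f/g$ and $\overline{f(\overline z)}/g$ for the forward direction, and recover the exponential type of $f$ in the converse as the maximum of its mean types in the two half planes. The only cosmetic difference is that the paper carries out the converse by estimating $\limsup_{y\to\infty}\frac{1}{y}\log|f(\pm iy)|$ directly rather than multiplying the ratios back by $g$, and your explicit attention to the bounded-type bookkeeping (and to the nonvanishing of $g$ in $\Pi^+$) is a welcome clarification of points the paper leaves implicit.
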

\begin{proof}
Suppose $f$ is of exponential type less than or equal to $\sigma$. Then $f$ is of mean type in upper and lower half plane, say, $\sigma_+$ and $\sigma_-$, respectively, where $\sigma_+, \sigma_- \leq \sigma$. Now the mean type of $\frac{f(z)}{g(z)}$ in $\Pi^+$ is
\begin{eqnarray}
\ds \lim_{r\rightarrow \infty} \frac{2}{\pi r} \int_0^\pi \log \left|\frac{f(re^{i\theta})}{g(re^{i\theta})}\right|\sin \theta d\theta & = & \sigma_+ - \sigma \qquad \left(\text{since,}\quad |g(\overline{z})| < |g(z)| \quad \text{in} \quad \Pi^+\right)\nonumber\\
\ds  & \leq & 0 .\nonumber
\end{eqnarray}
Similarly, the mean type of $\frac{\overline{f(\overline{z})}}{g(z)}$ in $\Pi^+$ is
\begin{eqnarray}
\ds \lim_{r\rightarrow \infty} \frac{2}{\pi r} \int_0^\pi \log \left|\frac{f(re^{-i\theta})}{g(re^{i\theta})}\right|\sin \theta d\theta & = & \sigma_- - \sigma \qquad \left(\text{again since,}\quad |g(\overline{z})| < |g(z)| \quad \text{in} \quad \Pi^+\right)\nonumber\\
\ds  & \leq & 0 .\nonumber
\end{eqnarray}

Conversely, suppose both $\frac{f(z)}{g(z)}$ and $\frac{\overline{f(\overline{z})}}{g(z)}$ are of nonpositive mean type in $\Pi^+$. We need to show that $f$ is of exponential type less than or equal to $\sigma$. But, we know that, $\ds \limsup_{|z|\rightarrow \infty} \frac{\log |g(z)|}{|z|} = \sigma$. Since, $\frac{f}{g}$ is nonpositive mean type in $\Pi^+$, we have the following
\begin{eqnarray}
\ds \limsup_{y\rightarrow \infty} \frac{\log |\frac{f(iy)}{g(iy)}|}{y} & \leq & 0,\nonumber\\
\ds \limsup_{y\rightarrow \infty} \left(\frac{\log |f(iy)|}{y} - \frac{\log |g(iy)|}{y}\right) & \leq & 0,\nonumber\\
\ds \limsup_{y\rightarrow \infty} \frac{\log |f(iy)|}{y} & \leq & \liminf_{y\rightarrow \infty} \frac{\log |g(iy)|}{y}\nonumber\\
\ds  & \leq & \limsup_{y\rightarrow \infty} \frac{\log |g(iy)|}{y}\nonumber\\
\ds  & \leq & \sigma .\nonumber
\end{eqnarray}
This implies that $f$ is of mean type $\leq \sigma$ on $\Pi^+$. Also we know $\frac{\overline{f(\overline{z})}}{g(z)}$ is nonpositive mean type in $\Pi^+$, therefore
\begin{eqnarray}
\ds \limsup_{y\rightarrow \infty} \frac{\log |\frac{f(-iy)}{g(iy)}|}{y} & \leq & 0,\nonumber\\
\ds \limsup_{y\rightarrow \infty} \frac{\log |f(-iy)|}{y} & \leq & \liminf_{y\rightarrow \infty} \frac{\log |g(iy)|}{y}\nonumber\\
\ds & \leq & \limsup_{y\rightarrow \infty} \frac{\log |g(iy)|}{y}\nonumber\\
\ds & \leq & \sigma .\nonumber
\end{eqnarray}
This implies that $f$ is of mean type $\leq \sigma$ on $\Pi^-$.

Since, $f$ is entire, $f$ is of exponential type $\sigma_1$, then by Kre\v{\i}n's theorem $\sigma_1 = \max\{$mean type of $f$ on $\Pi^+$, mean type of $f$ on $\Pi^-$\} $\leq \sigma$.
\end{proof}

If $g$ is as in Lemma \ref{JaSalem1}, then every $f\in H(g)$ is of exponential type less than or equal to $\sigma$.
The proof of the following theorem on bounded composition operators on $H(g)$ is very similar to the proof of Theorem \ref{Chacon}.
\begin{thm}\label{JaSa3}
Suppose $g$ is an entire function of exponential type $\sigma$ satisfying $|g(\overline{z})|<|g(z)|$, $\forall z \in \Pi^{+}$. Let $\varphi: \mathbb{C} \rightarrow \mathbb{C}$ be a non-constant holomorphic map. If the composition operator $C_\varphi$ sends $H(g)$ into itself, then $\varphi$ is affine.
\end{thm}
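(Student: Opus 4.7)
The plan is to follow the same template as the proof of Theorem \ref{Chacon}. The key reduction is provided by Lemma \ref{JaSalem1}: every element of $H(g)$ is an entire function of exponential type at most $\sigma$. Therefore the hypothesis $C_\varphi(H(g))\subseteq H(g)$ forces $f\circ\varphi$ to be an entire function of exponential type $\leq \sigma$ for every $f\in H(g)$, and the theorem will follow from a composition-rigidity statement for entire functions applied to a suitable witness $f$.

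The rigidity I would invoke is P\'olya's theorem (in the form used in \cite{ChChGi07,Po26}): if $f$ is a non-polynomial entire function of finite order $\rho$ and $\varphi$ is a non-constant entire function, then $f\circ\varphi$ has order $n\rho$ when $\varphi$ is a polynomial of degree $n$, and infinite order when $\varphi$ is transcendental. Consequently, if $f$ is non-polynomial of exponential type and $f\circ\varphi$ is also of exponential type, then $\varphi$ must be a polynomial of degree one, i.e., affine. It therefore suffices to produce a single non-polynomial $f\in H(g)$ and apply this principle to that $f$.

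A natural candidate is any reproducing kernel
\[ k_w(z)=\frac{i}{2\pi}\cdot\frac{g(z)\overline{g(w)}-\overline{g(\bar z)}g(\bar w)}{z-\bar w}, \qquad w\in\mathbb{C}, \]
which lies in $H(g)$ by construction. Since $g$ has positive finite exponential type $\sigma$, $g$ is transcendental, and for any $w$ with $k_w\not\equiv 0$ the kernel $k_w$ inherits exponential type $\sigma$ from $g$ and is therefore non-polynomial. The only mildly delicate step is arguing that such a $w$ exists, which is immediate so long as $H(g)\neq \{0\}$: if every $k_w$ vanished identically, then $f(w)=(f,k_w)=0$ for every $f\in H(g)$ and every $w\in\mathbb{C}$, contradicting nontriviality of the space. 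Applying P\'olya's theorem to this non-polynomial $k_w\in H(g)$, whose image $k_w\circ\varphi$ lies in $H(g)$ and hence is of exponential type $\leq \sigma$, then immediately forces $\varphi$ to be affine, completing the proof.
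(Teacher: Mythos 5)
Your proposal is correct and follows essentially the same route as the paper's proof: reduce via Lemma \ref{JaSalem1} to the statement that $k_w$ and $k_w\circ\varphi$ are both of exponential type at most $\sigma$, then apply P\'olya's theorem to the reproducing kernels to force $\varphi$ to be a polynomial of degree one. The only difference is cosmetic: where you cite the standard fact that precomposition with a degree-$n$ polynomial multiplies the order by $n$, the paper proves that step by hand via the maximum-modulus estimate $M_{f\circ\varphi}(r)\geq M_f\bigl(c|c_n|r^n/2^n\bigr)$, and both arguments share the same small implicit assumption that some kernel $k_w$ has positive order (is non-polynomial), which you at least flag.
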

\begin{proof}
Suppose that $C_\varphi : H(g) \rightarrow H(g)$. Since both $k_w\circ \varphi$  and $k_w$ are in $H(g)$, by Lemma \ref{JaSalem1} both are exponential type less than or equal to $\sigma$ and therefore of order 1. Then by Polya's theorem (\cite{Po26}) $\varphi$ is a polynomial. Let degree of $\varphi$ be $n$. We will show that $n = 1$.

Let $\ds \varphi(z)=\sum_{k=0}^{n}c_k z^k$, with $|c_n| \neq 0$. Then by Cauchy's integral formula we can show that $M_\varphi (r) \geq |c_n| r^n$ for large r. Now, without loss of generality we may assume that $\varphi(0) = 0$. Then there exists a constant $c \in (0,1)$, such that
\begin{center}
$M_{f\circ \varphi}(r) \geq M_f \left(cM_\varphi \left(\frac{r}{2}\right)\right) \geq M_f \left(c|c_n|\frac{r^n}{2^n}\right)$,
\end{center}
for each function $f$ in $H(g)$.

Now, suppose $f \in H(g)$. Since the order of $f$ is one, for arbitrarily large value of $r$ the inequality $M_f(r) \geq \exp{r^b}$ holds when $0 < b < 1$. Hence, from the above inequality, there are arbitrarily large values of $r$ such that
\begin{center}
$M_{f\circ \varphi}(r) \geq \exp{\left(\left(c|c_n|\frac{r^n}{2^n}\right)^b\right)} = \exp{\left(c^b|c_n|^b\frac{r^{nb}}{2^{nb}}\right)}$.
\end{center}
Since the order of $f\circ \varphi$ is one, there exist constants $A, B$ such that
\begin{center}
$M_{f\circ \varphi}(r) \leq A\exp{(Br)}$, \quad for all $r$.
\end{center}
Thus there are arbitrarily large values of $r$ such that $\exp{\left(c^b|c_n|^b\frac{r^{nb}}{2^{nb}}\right)} \leq A\exp{(Br)}$. This implies $nb \leq 1$, but $b$ is any number less than one, hence $n$ must be one (since $\varphi$ is not a constant function).
\end{proof}
\begin{thm}\label{JaSa4}
Suppose $g$ is an entire function of exponential type $\sigma$ satisfying $|g(\overline{z})|<|g(z)|$, $\forall z \in \Pi^{+}$. Let $\varphi(z) = az+b$, $z\in \mathbb{C}$ with $0<|a|\leq 1$, and $a\in \mathbb{R}$. Then the operator $C_\varphi$ is bounded on $H(g)$ if $|\frac{g(at+b)}{g(t)}|\leq c$, for some constant $c$ and for all $t\in \mathbb{R}$.
\end{thm}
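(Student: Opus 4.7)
The plan is to verify directly that $C_\varphi f$ satisfies both defining conditions of $H(g)$ whenever $f$ does, and to extract an operator-norm bound from the same computation.

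For the $L^2$ norm condition I would compute
\[
\|C_\varphi f\|^2_{H(g)} = \int_{-\infty}^{\infty} \left|\frac{f(at+b)}{g(t)}\right|^2 dt = \int_{-\infty}^{\infty} \left|\frac{f(at+b)}{g(at+b)}\right|^2 \left|\frac{g(at+b)}{g(t)}\right|^2 dt,
\]
apply the hypothesis $|g(at+b)/g(t)|\le c$ to pull out a factor of $c^2$, and then substitute $u = at+b$ (the Jacobian contributes $1/|a|$, valid for either sign of $a$) to recognize what remains as $\|f\|^2_{H(g)}$. This yields the estimate $\|C_\varphi f\|_{H(g)} \le (c/\sqrt{|a|})\,\|f\|_{H(g)}$, so once membership $C_\varphi f\in H(g)$ is established the operator is automatically bounded with the stated bound on its norm.

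For the mean-type part of the second condition, since $f\in H(g)$ and $g$ has exponential type $\sigma$, Lemma \ref{JaSalem1} says $f$ is entire of exponential type at most $\sigma$. The computation from the proof of Theorem \ref{Chacon-cor-weight}(a) (translation does not change type, dilation by $a$ multiplies type by $|a|$) shows that $f\circ\varphi$ is entire of exponential type at most $|a|\sigma\le\sigma$. Applying Lemma \ref{JaSalem1} in the forward direction with $f\circ\varphi$ in place of $f$, both $(f\circ\varphi)(z)/g(z)$ and $\overline{(f\circ\varphi)(\bar z)}/g(z)$ are of nonpositive mean type in $\Pi^+$.

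The bounded-type half of the second condition is not contained in Lemma \ref{JaSalem1}, and verifying it is the step I expect to be the most delicate to write cleanly. I would argue as follows: $f\circ\varphi$ and $g$ are each entire of exponential type and therefore of bounded type in $\Pi^+$ by Kre\v{\i}n's theorem recalled in Section 1. The assumption $|g(\bar z)|<|g(z)|$ on $\Pi^+$ forces $g$ to be zero-free there (a hypothetical zero $z_0\in\Pi^+$ would give $|g(\bar z_0)|<|g(z_0)|=0$), so $1/g$ is holomorphic on $\Pi^+$, and since the Nevanlinna class is closed under products and quotients of non-identically-zero functions, both $(f\circ\varphi)/g$ and $\overline{(f\circ\varphi)(\bar z)}/g(z)$ are of bounded type in $\Pi^+$. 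Combined with the mean-type conclusion and the norm estimate, this shows $C_\varphi f\in H(g)$ and $C_\varphi$ is bounded on $H(g)$.
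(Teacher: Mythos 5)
Your overall strategy coincides with the paper's: verify the two defining conditions of $H(g)$ for $F=f\circ\varphi$, obtaining the mean-type half from Lemma \ref{JaSalem1} after computing the exponential type of $F$ via the dilation/translation argument of Theorem \ref{Chacon-cor-weight}, and obtaining the $L^2$ bound from the hypothesis $|g(at+b)/g(t)|\le c$ together with a change of variables. However, two of your steps have genuine gaps.

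First, the change of variables. The theorem only assumes $a\in\mathbb{R}$; the translation $b$ may be complex, and the paper's proof explicitly writes $b=p+iq$. After you pull out the factor $c^2$ and substitute $u=at+b$, what remains is
\[
\frac{c^2}{|a|}\int_{\mathbb{R}}\left|\frac{f(x+p+iq)}{g(x+p+iq)}\right|^2dx,
\]
the $L^2$ norm of $f/g$ along the horizontal line $\operatorname{Im}u=q$, which is \emph{not} $\frac{c^2}{|a|}\|f\|^2_{H(g)}$ unless $q=0$. The paper closes this gap by invoking the Plancherel--P\'olya theorem to dominate the integral over the shifted line by the integral over $\mathbb{R}$; your argument silently assumes $b\in\mathbb{R}$. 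To repair it you must either restrict to real $b$ or insert this comparison (which, in the standard form of Plancherel--P\'olya, also introduces an extra constant depending on $q$ and the type, so the operator-norm bound $c/\sqrt{|a|}$ you state would need adjusting).

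Second, the bounded-type step. The version of Kre\v{\i}n's theorem recalled in Section 1 asserts that bounded type in both half-planes implies exponential type; you are using the converse, which is false in general: an entire function of exponential type is of bounded type in $\Pi^+$ only under the additional Cartwright-type condition $\int_{\mathbb{R}}\frac{\log^{+}|F(t)|}{1+t^{2}}\,dt<\infty$. So ``$F$ and $g$ are of exponential type, hence of bounded type in $\Pi^+$'' is not a valid inference, and the subsequent closure-under-quotients argument has nothing to stand on. (Your observation that $g$ is zero-free in $\Pi^+$ is correct and does guarantee analyticity of the quotients there, but not their bounded type.) The paper argues differently at this point, writing $\frac{F(z)}{g(z)}=\frac{F(z)/e^{-i\tau z}}{g(z)/e^{-i\tau z}}$ for $\tau$ at least the exponential type of both $F$ and $g$, so that numerator and denominator are individually controlled in $\Pi^+$; some such device, or an appeal to the fact that $f/g$ is of bounded type because $f\in H(g)$, is needed to complete this part.
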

\begin{proof}
Let $f\in H(g)$. Then the order and type of $f$ are respectively $1$ and $\sigma_1$, where $0 < \sigma_1 \leq \sigma$.

Denote $F = f \circ \varphi$.
Then, as in the proof of Theorem (\ref{Chacon-cor-weight}) the order and type of $f(az)$ are $1$ and $|a|\sigma_1$ respectively. Also by a simple calculation we can show that the order and type are invariant under translation. Hence the order and type of $F$ are $1$ and $|a|\sigma_1 (\leq \sigma)$ respectively. Hence by Lemma \ref{JaSalem1}, $\frac{F}{g}$, and $\frac{F^*}{g}$ are of nonpositive mean type in $\Pi^+$, where $F^*(z) = \overline{F(\overline{z})}$.
Since both $F$ and $g$ are of exponential type less than or equal to $\tau$, for $\tau \geq \sigma$, it is clear from the fact $\frac{F(z)}{g(z)} = \frac{F(z)/e^{-i\tau z}}{g(z)/e^{-i\tau z}}$ that $\frac{F}{g}$ is bounded type in $\Pi^+$. Similarly, we can show that $\frac{F^*}{g}$ is also bounded type in $\Pi^+$.

Now assuming $b=p+iq$, we have the following,
\begin{eqnarray}
\ds \int_{\mathbb{R}} \left|\frac{(f\circ \varphi)(t)}{g(t)}\right|^2 dt & = & \int_{\mathbb{R}} \left|\frac{f(at+b)}{g(t)}\right|^2 dt\nonumber\\
\ds & \leq & c^2\int_{\mathbb{R}} \left|\frac{f(at+b)}{g(at+b)}\right|^2 dt\nonumber\\
\ds & \leq & \frac{c^2}{|a|}\int_{\mathbb{R}} \left|\frac{f(x+p+iq)}{g(x+p+iq)}\right|^2 dx\nonumber\\
\ds & \leq & \frac{c^2}{|a|}\int_{\mathbb{R}} \left|\frac{f(x)}{g(x)}\right|^2 dx,\nonumber
\end{eqnarray}
which is finite. Here the last inequality follows from Plancherel-P\'{o}lya theorem (see ~\cite{Le96}, section 7.4) and this completes the proof.
\end{proof}

\noindent{\underline{\bf Case II}: When $(g)^{-1}\in L^2(\mathbb{R})$.}

The following theorem  gives a sufficient condition for composition operators to be bounded on the de Branges-Rovnyak space $H(g)$ when $\frac{1}{g} \in L^2(\mathbb{R})$.

\begin{thm}\label{JaSa2}
Suppose $g$ is an entire function satisfying $|g(\overline{z})|<|g(z)|$, $\forall z \in \Pi^{+}$ and also $(g)^{-1}\in L^{2}(\mathbb{R})$. Define, $d\lambda = \frac{1}{|g(t)|^{2}}dt$. Let $\varphi$ be an entire function such that its restriction to the real line, $\varphi_{\mathbb{R}}$, maps $\mathbb{R}$ into $\mathbb{R}$. If $C_{\varphi}$ maps $H(g)$ into $H(g)$ then there exists a positive constant $c$ such that $\lambda \varphi_{\mathbb{R}}^{-1}(E) \leq c \lambda(E)$, for every measurable set $E \subseteq \mathbb{R}$.
\end{thm}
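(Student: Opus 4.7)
The strategy is to transfer the boundedness of $C_\varphi$ on $H(g)$ into an integral inequality between the measures $\lambda\varphi_{\mathbb{R}}^{-1}$ and $\lambda$ on a rich subclass of test functions, and then to upgrade this inequality to a pointwise bound on the Radon--Nikod\'ym derivative by means of Poisson-type approximations built inside $H(g)$.

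First, by the closed graph theorem, the hypothesis $C_\varphi:H(g)\to H(g)$ furnishes a constant $M>0$ for which $\|C_\varphi f\|_{H(g)}^2\le M^2\|f\|_{H(g)}^2$ for every $f\in H(g)$. Writing each norm as an integral against $d\lambda = |g(t)|^{-2}\,dt$ and invoking the pushforward identity $\int |f\circ\varphi|^2\,d\lambda = \int |f|^2\,d(\lambda\varphi_{\mathbb{R}}^{-1})$, this rearranges to
\[
\int_{\mathbb{R}} |f|^2\,d(\lambda\varphi_{\mathbb{R}}^{-1}) \;\le\; M^2 \int_{\mathbb{R}} |f|^2\,d\lambda \qquad \text{for every } f \in H(g). \qquad (\ast)
\]
Next, I would verify that $\lambda\varphi_{\mathbb{R}}^{-1}\ll\lambda$. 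Because $\varphi_{\mathbb{R}}$ is a non-constant real-analytic function, its derivative has only isolated zeros, off which $\varphi_{\mathbb{R}}$ is locally a diffeomorphism; consequently $\varphi_{\mathbb{R}}^{-1}$ preserves Lebesgue null sets, and since $|g|^{-2}>0$ on $\mathbb{R}$ the measures $\lambda$ and Lebesgue are mutually absolutely continuous, which gives $\lambda\varphi_{\mathbb{R}}^{-1}\ll\lambda$. Let $h := d(\lambda\varphi_{\mathbb{R}}^{-1})/d\lambda\ge 0$; the remaining task is to prove $h\le M^2$ a.e.

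The third and decisive step is to extract this pointwise bound from $(\ast)$ by testing against a family $\{f_w\}_{w\in\Pi^+}\subset H(g)$ for which the normalized density $|f_w|^2\,d\lambda/\|f_w\|_{H(g)}^2$ furnishes an approximate identity at real points as $\operatorname{Im}(w)\to 0^+$. The model choice is a Cauchy-type construction arranged so that $|f_w(t)/g(t)|^2$ is a constant multiple of the Poisson kernel $\operatorname{Im}(w)/[(t-\operatorname{Re}(w))^2+\operatorname{Im}(w)^2]$. Applying $(\ast)$ to each such $f_w$ and letting $\operatorname{Im}(w)\to 0^+$, Lebesgue differentiation of the Radon measures $d\lambda$ and $d(\lambda\varphi_{\mathbb{R}}^{-1})$ yields $h(x)\le M^2$ at every common Lebesgue point $x$, hence for a.e.\ $x$. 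Integrating over any measurable $E\subseteq\mathbb{R}$ then gives $\lambda\varphi_{\mathbb{R}}^{-1}(E)\le M^2\lambda(E)$, proving the theorem with $c=M^2$.

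The main obstacle is the third step, specifically the construction of genuine Poisson-approximating test functions inside $H(g)$: membership requires both the $L^2$ integrability against $d\lambda$ and the bounded-type, nonpositive-mean-type conditions in $\Pi^+$ for both $f/g$ and $f^*/g$, and the naive ansatz $f_w(z) = g(z)/(z-\overline{w})$ fails because $f_w^*/g$ inherits a pole in $\Pi^+$. A workable remedy is to pass through the model-space isomorphism $H(g)\cong K_{\Theta}$ with $\Theta = g^*/g$ (which is inner by virtue of $|g(\overline{z})|<|g(z)|$ in $\Pi^+$), import the standard Poisson approximating identity from $K_{\Theta}$, and transport it back to $H(g)$ by multiplication by $g$; alternatively, one can use a symmetrized combination pairing the kernels at $w$ and $\overline{w}$ to cancel the offending singularity.
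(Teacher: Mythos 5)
Your steps (1) and (2) are sound: the closed graph theorem (legitimate here, since $H(g)$ is a reproducing kernel Hilbert space, so norm convergence implies pointwise convergence and $C_\varphi$ is closed) gives $\|C_\varphi f\|_{H(g)}\le M\|f\|_{H(g)}$, and the pushforward identity converts this into your inequality $(\ast)$. You have also correctly located the real difficulty: $(\ast)$ is known only for $f\in H(g)$, whose image under $f\mapsto f/g$ is the model space $K_\Theta$ with $\Theta(z)=\overline{g(\overline{z})}/g(z)$, a \emph{proper} closed subspace of $L^2(dt)$, so one cannot test against indicators. (The paper's own proof is a two-line restriction to the real line followed by an appeal to Theorem 3.1, and does not engage with this density issue at all; your plan is more honest about where the work lies, but it must then actually do that work.)

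The gap is that step (3) is not carried out, and neither proposed remedy closes it. As you note, the exact-Poisson test function $f_w=g(z)/(z-\overline{w})$ fails to lie in $H(g)$ because $f_w^*/g=\Theta(z)/(z-w)$ has a pole at $w\in\Pi^+$; but the identical obstruction persists in the model space, since $1/(z-\overline{w})\in K_\Theta$ if and only if $\Theta(w)=0$, so the isomorphism $H(g)\cong K_\Theta$ transports the problem verbatim rather than solving it. The natural substitutes, the normalized reproducing kernels, satisfy
\[
\frac{|k_w(t)|^2}{|g(t)|^2\,k_w(w)}\;=\;\frac{\bigl|1-\overline{\Theta(w)}\Theta(t)\bigr|^2}{1-|\Theta(w)|^2}\cdot\frac{\operatorname{Im}w}{\pi\,|t-w|^2},
\]
which is the Poisson kernel multiplied by a factor whose denominator $1-|\Theta(w)|^2$ tends to $0$ as $w$ approaches almost every real point (because $\Theta$ is inner). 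The weak-$*$ limits of these densities are the Clark--Aleksandrov measures of $\Theta$, which are in general singular rather than point masses, so the asserted ``Lebesgue differentiation yields $h(x)\le M^2$'' is unsupported; the symmetrization idea is not developed enough to assess. Until a genuine approximate identity inside $H(g)$ (or a density/weak-$*$ argument replacing it) is produced, the passage from $(\ast)$ to $\lambda\varphi_{\mathbb{R}}^{-1}(E)\le c\,\lambda(E)$ for arbitrary measurable $E$ remains unproved.
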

\begin{proof}
Suppose $f\in H(g)$. Then $f\in L^{2}(\lambda)$. Since $C_{\varphi}$ sends $H(g)$ into $H(g)$, $f\circ \varphi \in H(g)$; that is, $f\circ \varphi \in L^{2}(\lambda)$ and this implies $f\circ \varphi_{\mathbb{R}} \in L^{2}(\lambda)$. Then since $\lambda$ is integrable, it is a $\sigma$-finite measure on $\mathbb{R}$, hence by Theorem \ref{Singh} there exists a positive constant $c$ such that $\lambda \varphi_{\mathbb{R}}^{-1}(E) \leq c \lambda(E)$, for every measurable set $E \subseteq \mathbb{R}$.
\end{proof}

\bibliographystyle{amsalpha}

\end{document}